\numberwithin{equation}{section}
\theoremstyle{plain}
\newtheorem{theorem}{Theorem}
\newtheorem{lemma}[theorem]{Lemma}
\newtheorem{corollary}[theorem]{Corollary}
\newtheorem{proposition}[theorem]{Proposition}
\theoremstyle{definition}
\newtheorem{definition}[theorem]{Definition}
\newtheorem{remark}[theorem]{Remark}
\def\eps{\varepsilon}
\def\lan{\langle}
\def\ran{\rangle}
\def\bdef{\begin{definition}}
\def\endef{\end{definition}}
\def\bthm{\begin{theorem}}
\def\ethm{\end{theorem}}
\def\blm{\begin{lemma}}
\def\elm{\end{lemma}}
\def\brm{\begin{remark}}
\def\erm{\end{remark}}
\def\bprop{\begin{proposition}}
\def\eprop{\end{proposition}}
\def\bcor{\begin{corollary}}
\def\ecor{\end{corollary}}
\def\be{\begin{eqnarray}}
\def\ee{\end{eqnarray}}
\def\beal{\begin{aligned}}
\def\enal{\end{aligned}}
\def\om{\omega}
\def\eps{\varepsilon}
\def\phi{\varphi}
\def\R{\mathbb R}
\def\N{\mathbb N}
\def\T{\mathbb T}
\def\Z{\mathbb Z}
\def\cP{\mathcal P}
\def\~{\tilde}
\def\cD{\mathcal D}
\def\EE{\mathbf{E}}
\def\p{\partial}
 \newcommand{\strela}{\rightharpoonup}
\def\llan{\langle\!\langle}
\def\rran{\rangle\!\rangle}
\def\be{\begin{equation}}
\def\ee{\end{equation}}
\def\bdef{\begin{definition}}
\def\endef{\end{definition}}
\def\blm{\begin{lemma}}
\def\elm{\end{lemma}}
\def\beal{\begin{aligned}}
\def\enal{\end{aligned}}
\newtheorem*{Pf}{Proof}
\renewenvironment{proof}{\begin{Pf} \begin{upshape}} {\end{upshape} \qed\end{Pf}}
\numberwithin{equation}{section}
\numberwithin{theorem}{section}
\title[Kolmogorov's 4/5-law for Burgers equation]
{Weak and strong versions of the Kolmogorov 4/5-law for stochastic Burgers equation}
\author{Peng Gao}
\address{School of Mathematics and Statistics, and Center for Mathematics and Interdisciplinary Sciences, Northeast Normal University,
Changchun 130024,  P. R. China,\& Peoples' Friendship University of Russia (RUDN University), Moscow, Russia}
\email{gaop428@nenu.edu.cn}	
\author{Sergei Kuksin}
\address{Universit\'e Paris Cit\'e and Sorbonne Universit\'e, CNRS, IMJ-PRG, F-75013 Paris, France 
   \&  Peoples' Friendship University of Russia (RUDN University),
6 Miklukho-Maklaya St, Moscow, 117198, Russian Federation}
\email{ sergei.kuksin@imj-prg.fr}
\begin{document}

\maketitle

\begin{abstract}
For solutions of the space-periodic stochastic 1d Burgers equation we establish two versions of the Kolmogorov 4/5-law which provides an asymptotic
expansion for the third moment of increments of turbulent velocity fields. We also prove for this equation an analogy of the Landau objection 
to possible universality of Kolmogorov's  theory of turbulence, and show that the third moment is the only one which admits a universal
asymptotic expansion.
 \end{abstract}

\date{}

	\maketitle
%

\section{The 4/5 law} The Kolmogorov theory of turbulence, known as the K41 theory (see in \cite{LL, Fr}), examines homogeneous turbulence,
corresponding to velocity fields $u(t,x)$ which are random fields, stationary in time $t$, homogeneous and isotropic in the space-variable $x$.
Using dimension analysis and arguing on physical level of rigour, Kolmogorov made a number of remarkable predictions concerning
small-scale properties of  velocity fields  with large Reynolds numbers $R$, corresponding to increments
$
u(t,x+r)-u(t,x).
$
Here the vectors $r$ are such that their lengths $|r|$ belong to the {\it inertial range},  which is an interval in $\R_+$, formed by  real numbers which are ``small but not too small" in term of $R$ and the rate of dissipation of energy
 $\eps =\nu \EE | \nabla u(t,x)|^2$,
where $\nu>0$ is the kinematic viscosity of the fluid.  One of these predictions is the 4/5-law, stating
that for large $R$ and for $r$ from the inertial range
\be\label{K45}
\EE \Big[ \big(u(t,x+r)-u(t,x)\big)\cdot \tfrac{r}{|r|}\Big]^3 = - \tfrac45\, \eps |r|.
\ee
 Later the law was intensively discussed by physicists and was re-proved, using  physical arguments, always  related to the
 original Kolmogorov's arguing (see \cite[Sec.~6.2]{Fr} and \cite[Sec.~2.2.2]{Fal}). Rigorous verification of the 4/5-law and other
 laws of the K41 theory remains an outstanding open mathematical problem. Recently a progress
  was achieved in \cite{Bedr}. There -- as it is often the case since 1960's -- 3d
 turbulent flows are modelled by solutions of the stochastic 3d NSE with small viscosity $\nu>0$
  on the torus $\T^3$. It is known that the latter equation has stationary solutions. Taking such a solutions
  $u^\nu(t,x)$ and  assuming  that it meets the assumption
  $
  \nu \EE\| u(t)\|_{L_2}^2 =o(1) \,
  $
  as $\nu\to0$ they prove that \eqref{K45} holds after averaging in $r$ over a sphere of  radius $|r|$.
  The relation  is established    for all $|r|$ from an interval in $\R_+$
  whose left end goes to zero with $\nu$, but whose relation with the inertial range is   not clear.

 In order to understand better turbulence and the laws  of the K41 theory, starting
1940's physicists use stochastic 1d models, where  the most popular one is given by the stochastic space-periodic Burgers equation, see
\cite{FF, BF, BKh}. The equation describes fictitious 1d ``burgers fluid" and turbulence in it, called by U.~Frisch {\it burgulence}.
Our goal in this work is to rigorously derive for this equation two relations, which may be regarded as weak and strong forms of the 4/5-law
\eqref{K45}.

In  the next Section \ref{s_2}  we discuss the stochastic  space-periodic  Burgers equation, following the book \cite{BK}.
There we develop the notation and state properties
of the equation's solutions, used in the rest of the paper.
Then in Section~\ref{s_3} we prove a weak form of the 4/5-law for the
 Burgers equation. There we show that for any solution $u(t,x)$ of the equation, its cubed increments
$
(u(t,x+l) -u(t,x))^3
$
with $l$ from the inertial range for burgulence,
averaged in $x$, in ensemble and locally averaged in time, behave as $-$Const$\,l$, uniformly in small non-negative viscosities. See relations
\eqref{B451}, \eqref{B452}.

In Section \ref{s_4} we prove a strong form of the 4/5-law for the Burgers equation. Namely, there in Theorem~\ref{t_final} we show that if $u^{\nu \,st}(t,x)$ is a
stationary in time solution of the equation with a positive
viscosity $\nu$\ \footnote{All such solutions have the same distribution.}
and $\eps^B =\nu \EE \int | u_x(t,x)|^2dx$
 is its  rate of dissipation of energy,   then, for any $t$,
\be\label{final}
 \EE \int\big( (u(t,x+l) -u(t,x)\big)^3 dx =-12 \eps^B l+ o(l) \quad \text{as} \;\; \nu\to0.
\ee 
The relation is proved to hold for  $l$ from a  {\it strongly inertial range}, which is ``just a bit smaller" than the inertial range for burgulence. The latter is the segment $ [c_1\nu, c]$, where the constants $c_1, c_2$ depend on the force, applied to the
``burgers fluid" (see a discussion after Theorem~\ref{t_4}), and the strongly inertial range  is defined in  \eqref{strong}. 
 In Corollary~\ref{c_4.2} we show that
\eqref{final} as well
 holds for any solution of Burgers equation, asymptotically as $t\to\infty$.  We also prove in Theorem~\ref{t_final} that
a stationary solution of the inviscid Burgers equation satisfies \eqref{final} with removed limit ``$\nu\to0$" and $o(l)$
replaced by $O(l^3)$.
Relation \eqref{final} is the form in which the ``4/5-law for burgulence" appears in works of physicists, justified by  heuristic arguments. E.g.
see \cite{Pol, DV}.

In  Section \ref{s_5} we discuss the criticism of possible universality of the K41 theory, made by  Landau.
 It  implies (on the physical level of rigour)  that in the frame of  K41
 the only universal relation for moments of increments $u(t,x+r) - u(t,x)$ is the 4/5-law \eqref{K45}
for cubic moments. We state a reformulation  for burgulence of the Landau claim and rigorously prove it.

\medskip

\noindent {\it Notation.} For a metric space $M$ we denote by $\cP(M)$ the set of probability Borel measures on $M$, for a random variable $\xi$,
valued in $M$, denote by $\cD(\xi)\in \cP(M)$ its distribution, and for a function $f$ and a measure $\mu$ on $M$ denote
$
\lan f,\mu\ran = \int_M f\,d\mu.
$
The arrow $\strela$  stands for   weak convergence of measures.

\section{Stochastic Burgers equation}\label{s_2}
\subsection{The setting and well posedness of the equation}
The stochastic  Burgers  equation under periodic boundary conditions has the following form:
\begin{eqnarray}\label{1.1}
u_{t}(t,x)+u(t,x)u_{x}(t,x)-\nu u_{xx}(t,x)=\partial_{t}\xi(t,x),\qquad t\geq0, \; x\in S^{1}:=\mathbb{R}/\mathbb{Z},
\end{eqnarray}
where the viscosity coefficient
 satisfies $\nu\in(0,1]$, and
 \be\label{1.2}
 \xi(t,x):=\sum\limits_{s\in \mathbb{Z}^{*}}b_{s}\beta_{s}(t)e_{s}(x).
 \ee
 Here $\{b_{s}\}$ are real numbers,  $\{\beta_{s}\}$ are standard independent Wiener processes defined on a probability space
  $(\Omega,\mathcal{F},\mathbb{P})$, and $\{ e_s(x), s \in \Z^* =\Z\setminus\{0\}\}$ is an
   usual trigonometric basic in the space of
  1-periodic functions with zero mean-value. Namely,  for a $k\ge1$,
  $
  e_k(x) = \sqrt2\, \cos(2\pi kx)
  $
  and
  $
  e_{-k}(x) = \sqrt2\, \sin(2\pi kx).
  $
We suppose that the process $\xi$ is non-zero and is sufficiently smooth in $x$:
\be\label{B5}
B_0>0, \;\; B_ {M}<\infty \;\;\text{ for some } \; M\ge5, \;\; \text{where} \;\; B_ {m}:= \sum |2\pi s|^{2m} b_s^2\le\infty.
\ee
Since always $\nu\le1$, then below $\nu>0$ stands for $0<\nu\le1$.

 As the space-meanvalue of $\xi(t,x)$ is zero, then $\int_{S^1} u(t,x)\,dx$ is an integral of motion for the equation, and we always assume that it vanishes:
  $$
  \int u(t,x)\,dx\equiv 0.
  $$

For short we will write the solutions $u^\nu(t,x)$ of \eqref{1.1} as  $u^\nu(t)$ or just $u(t)$. If $u_0$ is a random field,  independent from $\xi$
 (e.g. $u_0$ is non-random), then by  $u^\nu(t,x;u_0)= u^\nu(t;u_0)$ we will denote a solution of \eqref{1.1}, equal $u_0$ at $t=0$. By $H$ we
 denote the space of $L_2$-functions on $S^1$ with zero mean, given the $L_2$-norm $\|\cdot\|$ (so $\{ e_s\}$ is its Hilbert basis). For
 $m\in \N \cup \{0\}$ we denote by $H^m$ the Sobolev space
 $$
 H^m =\{ v\in H: \p^m v\in H\},
 $$
 given the homogeneous norm $\| u\|_m = \| \p^m u\|$ (so $\| u\|_0 = \|  u\|$). By $| \cdot |_p$, $1\le p\le\infty$, we denote the $L_p$-norm on
 $S^1$, and abbreviate $L_p(S^1)$ to $L_p$.

 It is well known that under assumption \eqref{B5} eq.~\eqref{1.1} is well posed in  spaces $H^m$, $1\le m\le M$, and defines in spaces  $H^m$ with $1\le m\le M-1$ Markov processes with continuous trajectories.
 E.g. see \cite{DZ, BK}
 and references in \cite{BK}. Moreover, if $u_0\in H^1$ is non-random, then
 \be\label{es1}
 \EE \exp (\sigma \| u^\nu(t;u_0)\|^2) \le C
 \ee
 for some $\sigma, C>0$ (depending on $u_0, \nu$ and the random force). If $u_0 \in H^m$, $1 \le m\le M$, then 
 \be\label{es2}
 \EE \| u^\nu(t; u_0)\|_m^2
  \le C_m  \quad \forall\, t\ge0,
 \ee
 where again $C_m$ depends on $u_0, \nu$ and $\xi$.  A
  solution $u^\nu= u^\nu(t;u_0) $ may be constructed by  Galerkin's method. Then $u^\nu$ is obtained as a limit of Galerkin's
 approximations
 $
 u^{(N)}(t) \in $ span$\,\{e_s, |s| \le N\} \subset H,
 $
 which also satisfy estimates \eqref{es1}, \eqref{es2}. If $u_0 \in H^m$ with $1\le m <M$, then a.s.
 \be\label{Gal}
 u^{(N)}(t) \to u^\nu(t) \;\; \text{in} \;\;H^{m-1}\;\; \text{as \ $N\to\infty$, \ uniformly \, in} \; t\in [0,T],
 \ee
 for any finite $T>0$. See in \cite{BK} Theorems 1.4.2 and 1.4.4.

 \subsection{Main estimates for solutions} A remarkable property of solutions $u^\nu$ of eq.~\eqref{1.1}
 is given by
 Oleinik's estimates: If $u_0 \in H^1$ is
 a r.v.,  independent from $\xi$, then for each $0< q <\infty$ and $0<\theta \le1$ there exists $C=C(q, B_4) >0$ such that for every
 $\nu >0$ and $t\ge \theta$  the solution $u^\nu(t)= u^\nu(t;u_0) $ satisfies
 \be\label{O1}
 \EE |u_x^{\nu\, +} (t) |_\infty^q \le C \theta^{-q},
 \ee
  \be\label{O2}
 \EE \big( |u^{\nu} (t) |_\infty^q  +  |u_x^{\nu} (t) |_1^q  \big)
 \le C \theta^{-q}
 \ee
(in \eqref{O1} $u_x^{\nu\, +} (t) =u_x^{\nu\, +} (t,x) $ is a positive part of the function $ u_x^{\nu} (t,x) $).  We stress
 that $C$ does not depend on $\nu$ and $u_0$.
These relations imply crucial lower and upper estimates on Sobolev norms of solutions. To state them we need a definition: for a random process
$f^\om(t)$ and $\sigma\ge0,  T>0$  we denote
\be\label{aver}
 \llan f\rran :=
\llan f\rran_T^{T+\sigma} =  \frac1{\sigma} \int^{T+\sigma}_T \EE f(s)\,ds.
\ee
The following result is proved in \cite[Section 2.3]{BK}:

\begin{theorem}\label{t_1}
For each $\theta>0$, any $\N \ni m\le M$
  and for every random variable $u_0\in H^1$, independent from $\xi$,  solution
$u^\nu(t) =u^\nu(t;u_0)$ satisfies

1) $\EE \| u^\nu(t) \|_m^2 \le C'_m \nu^{-(2m-1)}$ \, for $t\ge \theta$, where $C'_m$ depends on $\theta$ and
$B_{\max(4,m)}$.

2) There exists $T_*(\theta) >0$ such that
\be\label{Hm}
C_m^{-1} \nu^{-(2m-1)} \le \llan \EE \| u^\nu \|_m^2\rran \le C_m \nu^{-(2m-1)}
\ee
for some $C_m\ge1$. Here the averaging
$   \llan \cdot\rran=\llan \cdot \rran_T^{T+\sigma} $  corresponds to arbitrary constants $\sigma\ge\theta$ and  $T\ge T_*$.
The constants  $C_m$ depend on $\theta$, $T_*$ and $\xi$, but not
on  $\sigma\ge\theta, T\ge T_*$ and $\nu\in(0,1]$.
\end{theorem}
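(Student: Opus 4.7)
\medskip
\noindent\textbf{Proof plan.}
The plan is to combine It\^o's formula with the Oleinik estimates \eqref{O1}--\eqref{O2}. Applying It\^o to $\|u^\nu(t)\|_m^2$ gives
\[
d\|u\|_m^2 = \bigl[\,-2\nu \|u\|_{m+1}^2 + 2\lan \p^m u,\p^m(uu_x)\ran + B_m\,\bigr]\,dt + dM_t,
\]
with $M_t$ a martingale. Taking expectations, integrating over $[T,T+\sigma]$ and dividing by $\sigma$ yields the master identity
\[
2\nu\,\llan \EE\|u\|_{m+1}^2\rran
= B_m + 2\llan\EE\lan \p^m u,\p^m(uu_x)\ran\rran
+ \frac{\EE\|u(T)\|_m^2-\EE\|u(T+\sigma)\|_m^2}{\sigma}.
\]
For $m=0$ the trilinear term vanishes since $\int u^2u_x\,dx=0$, and the boundary term is $O(\sigma^{-1})$ thanks to $\EE\|u(t)\|^2\le \EE |u(t)|_\infty^2\le C$ coming from \eqref{O2}. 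Choosing $\sigma\ge\theta$ and $T\ge T_*(\theta)$ with $T_*$ large enough to absorb the boundary error into $\tfrac12 B_0$, this already gives both inequalities in \eqref{Hm} for $m=1$.

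For the upper bound in part~1 (and the upper bound in \eqref{Hm} for $m\ge2$), I would proceed inductively on $m$. The trilinear term $\lan\p^m u,\p^m(uu_x)\ran$, after repeated integration by parts, decomposes into $\tfrac12\int u_x(\p^m u)^2\,dx$ plus commutator terms containing derivatives of $u$ of order at most $m$. The first summand is the crucial one: bound it from above by $|u_x^+|_\infty\|u\|_m^2$ and invoke \eqref{O1} to control $\EE|u_x^+|_\infty^q$ uniformly in $\nu$. The commutator terms are estimated by Gagliardo--Nirenberg interpolation between $|u|_\infty$ (bounded by \eqref{O2}) and $\|u\|_{m+1}$, absorbing the latter into the dissipation $2\nu\|u\|_{m+1}^2$ after a small-coefficient Young inequality. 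Feeding the result into the master identity and iterating in $m$ gives $\EE\|u^\nu(t)\|_m^2\le C'_m\nu^{-(2m-1)}$ for $t\ge\theta$, and the time-averaged version in \eqref{Hm}.

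The lower bound in \eqref{Hm} for $m\ge2$ is the subtle point, and is the step I expect to be the main obstacle. Starting from $\llan \EE\|u_x\|^2\rran \ge c\,\nu^{-1}$, a naive Hilbert-space interpolation $\|u\|_1^2\le \|u\|_0\|u\|_m^{2/m}\|u\|_0^{2(m-1)/m}$ only produces the exponent $\nu^{-m}$, losing a factor $\nu^{-(m-1)}$. To recover the sharp $\nu^{-(2m-1)}$ one must use an interpolation saturated by the shock profile, of the form
\[
\|\p u\|_{L^2}^2 \le C\,|u|_\infty^{\,\alpha_m}\,|u_x|_1^{\,\beta_m}\,\|u\|_m^{\,\gamma_m}
\]
with the exponents dictated by dimensional analysis on a single shock of width $\nu$. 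Since $|u|_\infty$ and $|u_x|_1$ have bounded moments by \eqref{O2}, rearranging and Jensen/H\"older in the $\llan\cdot\rran$-average transfer the lower bound on $\|u_x\|^2$ into one on $\llan\EE\|u\|_m^2\rran$ with the correct exponent. Combining this with the already established upper bound for $m-1$ (to handle cross terms from H\"older in expectation) finishes the induction and yields \eqref{Hm}.
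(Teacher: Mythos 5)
First, note that the paper does not prove Theorem~\ref{t_1} itself; it is quoted from \cite[Section 2.3]{BK}, so your proposal has to be measured against that argument. Your skeleton (It\^o/energy balance, Oleinik estimates, Gagliardo--Nirenberg, induction in $m$) is the right family of ideas, and your treatment of the case $m=1$ and of the lower bounds for $m\ge2$ (interpolation $\|u\|_1\le C\|u\|_m^{1/(2m-1)}|u_x|_1^{(2m-2)/(2m-1)}$, then H\"older in $\omega$ and Jensen in the time average, using \eqref{O2} to control $|u_x|_1$) is essentially the argument in \cite{BK}. But there is a fatal error in your upper bound for $m\ge2$. Writing $u_t=-uu_x+\nu u_{xx}+\dot\xi$, the leading part of the trilinear term in $\frac{d}{dt}\|u\|_m^2$ is $-(2m-1)\int u_x(\p^m u)^2\,dx$, and since $(2m-1)>0$ an \emph{upper} bound for it requires control of $\int(-u_x)(\p^m u)^2\le |u_x^-|_\infty\|u\|_m^2$, i.e.\ of the \emph{negative} part of $u_x$ — which is exactly the quantity of size $\nu^{-1}$ at shocks and is not controlled by \eqref{O1}. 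Your proposed bound by $|u_x^+|_\infty\|u\|_m^2$ has the wrong sign. The error is not cosmetic: this term is precisely what drives $\|u\|_m^2$ up to $\nu^{-(2m-1)}$ (already for $m=1$ it is $-\tfrac12\int u_x^3\sim\nu^{-2}$), and if your estimate were valid the master identity would yield $\llan\EE\|u\|_{m+1}^2\rran\lesssim \nu^{-1}\llan|u_x^+|_\infty\|u\|_m^2\rran\lesssim\nu^{-2m}$, contradicting the lower bound $\gtrsim\nu^{-(2m+1)}$ in part 2). The correct route (as in \cite{BK}) estimates the whole term by $|\lan\p^m u,\p^m(uu_x)\ran|\le C|u|_\infty\|u\|_m\|u\|_{m+1}$, absorbs $\|u\|_{m+1}$ into the dissipation by Young, and then uses the reverse interpolation $\|u\|_{m+1}^2\ge c\,\|u\|_m^{2(2m+1)/(2m-1)}|u_x|_1^{-4/(2m-1)}$ so that the dissipation becomes a superlinear damping of $y=\|u\|_m^2$; the balance $\nu y^{(2m+1)/(2m-1)}\sim\nu^{-1}y$ is what produces $y\sim\nu^{-(2m-1)}$, and the superlinearity is also what gives the pointwise-in-$t$ bound of part~1) for all $t\ge\theta$ independently of $u_0$ (a time-averaged master identity alone cannot give a pointwise bound).

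Two smaller gaps: (i) for the lower bound at $m=1$ you dismiss the boundary term $(\EE\|u(T)\|^2-\EE\|u(T+\sigma)\|^2)/\sigma$ by "choosing $T_*$ large", but since $\sigma$ may be as small as $\theta$ and $\EE\|u(t)\|^2$ is only known to be bounded by $C(\theta)$, largeness of $T$ does not by itself make this quantity small compared with $B_0$; one needs the stabilization of $t\mapsto\EE\|u^\nu(t)\|^2$ as $t\to\infty$ uniformly in $\nu$ (via the $\nu$-independent mixing in $L_p$), and this is exactly why the threshold $T_*(\theta)$ appears in the statement. (ii) The sign in your It\^o identity ($+2\lan\p^m u,\p^m(uu_x)\ran$ instead of $-$) should be fixed, as the whole point of the estimate hinges on which side of zero this term sits.
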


We will write $A\sim B$ if the two quantities satisfy
$
C_1 B \le A\le C_2 B
$
for some $C_1, C_2>0$  which do not depend on $\nu$ and $\sigma, T$ as in the theorem. Then \eqref{Hm} may be written as
$$
 \llan \EE \| u^\nu \|_m^2\rran \sim \nu^{-(2m-1)}.
$$
For $m=0$ relations for the norm $\| u^\nu(t;u_0)\|_0$ are different. Namely from \eqref{O2} it follows that
$\EE \| u^\nu(t) \|^2 \le C'_0 $ \, for $t\ge \theta$, and it is shown in \cite{BK} that for the brackets as in the theorem's  assumptions,
$
 \llan \EE \| u^\nu \|^2\rran \sim 1.
$
The quantity $\frac12 \EE \| u^\nu(t) \|^2$ is the (averaged) energy of the ``1d flow" $u^\nu(t,x)$. Formally applying Ito's formula to $\frac12\| u^\nu(t) \|^2$
and taking the expectation we get the {\it balance of energy} relation
\be\label{be}
 \tfrac12 \EE \| u^\nu(t) \|^2 -  \tfrac12 \EE \| u^\nu(0) \|^2 =
 - {\EE}\int_0^t \nu \| u^\nu(s)\|_1^2 ds + \tfrac12 B_0 t.
\ee
See \cite[Chapter 1.4]{BK} for its rigorous derivation if $u_0$ is sufficiently smooth.

 \subsection{The mixing} \label{s_13}
  Equation \eqref{1.1} is mixing.  It means that there exists a unique measure $\mu_\nu \in \cP(H^{M+1})$ such
 that for any r.v. $u_0 \in H^1$, independent from $\xi$,
 \be\label{mix}
 \cD u^\nu(t;u_0) \strela \mu_\nu \;\; \text{in} \;\; \cP(H^{M-1}) \;\; \text{as} \;\; t\to\infty,
\ee
see \cite[Chapter 3.3]{BK}. The rate of convergence in \eqref{mix} may depend on $\nu$ (but  it becomes
$\nu$-independent if we regard $\cD u^\nu(t)$ and $\mu_\nu$ as measures in $L_p$, see \cite[Chapter 4.2]{BK}).
 If $u_0$ is a r.v. such that $\cD u_0 =\mu_\nu$, then
$
\cD u^\nu(t; u_0) \equiv \mu_\nu.
$
Such solutions $u^\nu$ are called {\it stationary}.

 \subsection{Inviscid limit} When $\nu\to0$, a solution $u^\nu(t;u_0)$ converges to a limit, known as an {\it entropy solution} of  eq.~\eqref{1.1}$\mid\!_{\nu=0}$.
 This result may be established in a number of different ways. In the form, given in the theorem below, it is proved in
 \cite[Chapter~8]{BK}, following Kruzkov's approach \cite{Kruz}, based on a version of Oleinik's estimates \eqref{O1}, \eqref{O2}.

\begin{theorem}\label{t_3}
Let $T>0$, $u_0\in H^2$ be a non-random function and $u^\nu= u^\nu(t;u_0)$. Then there exists a random field  $u^0(t,x):=u^{0\,\om}(t,x;u_0)$ such
that almost surely, for any $1\le p<\infty$,

1) $u^0 \in L_\infty( [0,T] \times S^1) \cap  C([0,T];  L_p) $;

2) $u^\nu(t) \to u^0(t)$ in $L_p$, uniformly in $t\in [0,T]$;

3) for any $t\ge\theta>0$ and $0< q<\infty$,\
$
\EE | u^0(t)|_p^q \le C(q, B_4) \theta^{-q};
$

4) $u^0(0,x) = u_0$ and $u^0(t,x)$ satisfies eq. \eqref{1.1} in the sense of generalised functions.
\end{theorem}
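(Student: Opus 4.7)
The plan is to identify $u^0$ as a pathwise Kruzkov entropy solution of the inviscid stochastic Burgers equation, obtained by compactness from the viscous family using the uniform-in-$\nu$ Oleinik bounds. Since $B_M<\infty$ for $M\ge 5$, the noise $\xi(t,x)=\sum_s b_s\beta_s(t)e_s(x)$ is a.s.\ continuous in $t$ with values in $C^3(S^1)$. The natural first move is to fix $\omega$ in a full-measure set and set $w^\nu:=u^\nu-\xi$; equation \eqref{1.1} becomes
\[
w^\nu_t+(w^\nu+\xi)(w^\nu+\xi)_x-\nu(w^\nu+\xi)_{xx}=0,
\]
a random but deterministic-in-flavour PDE with no stochastic integral, to which classical entropy methods apply pathwise.

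\textbf{Uniform estimates and compactness.} The Oleinik bounds \eqref{O1}--\eqref{O2} are uniform in $\nu\in(0,1]$; combined with the time continuity of solutions and a Chebyshev/Borel--Cantelli argument along a countable dense time grid, they give an a.s.\ pathwise estimate $\sup_{\nu}\sup_{t\in[\theta,T]}(|u^\nu(t)|_\infty+|u^\nu_x(t)|_1)\le C(\omega,\theta,T)$. This places $\{u^\nu(t)\}_{\nu,t}$ in a bounded subset of $L_\infty\cap BV(S^1)$, hence relatively compact in every $L_p$, $p<\infty$. Testing \eqref{1.1} against a smooth $\varphi$ and using the uniform $L_\infty$ bound together with the continuity of $\xi$ yields equicontinuity of $t\mapsto u^\nu(t)$ in $H^{-2}(S^1)$, uniformly in $\nu$. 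An Arzel\`a--Ascoli/interpolation argument extracts a subsequence $\nu_k\to 0$ with $u^{\nu_k}\to u^0$ in $C([\theta,T];L_p)$. The artificial cutoff $\theta>0$ is removed by comparing with the mild viscous solution near $t=0$, using $u_0\in H^2$.

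\textbf{Entropy identification and Kruzkov uniqueness.} For any convex $\eta\in C^2$ with flux $q(u):=\int^u s\,\eta'(s)\,ds$, smooth viscous solutions satisfy
\[
\partial_t\eta(u^\nu)+\partial_x q(u^\nu)-\nu\partial_{xx}\eta(u^\nu)+\nu\eta''(u^\nu)(u^\nu_x)^2=\eta'(u^\nu)\,\partial_t\xi,
\]
so that the entropy inequality $\partial_t\eta(u^\nu)+\partial_x q(u^\nu)\le \eta'(u^\nu)\,\partial_t\xi+\nu\partial_{xx}\eta(u^\nu)$ holds distributionally. Passing $\nu_k\to 0$ via strong $L_p$ convergence and the uniform $L_\infty$ bound identifies $u^0$ as a Kruzkov entropy solution of $u_t+uu_x=\partial_t\xi$ with datum $u_0$. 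Kruzkov's doubling-of-variables argument (adapted to this pathwise forced setting in \cite[Chapter 8]{BK}) yields $L^1$-contraction, hence uniqueness in this class, so the whole family $u^\nu$ converges to the same $u^0$, not merely the subsequence $\nu_k$. Properties (1)--(4) are then immediate: (1) from the uniform $L_\infty\cap BV$ bound and the time continuity in $L_p$, (2) is the convergence just established, (3) from Fatou applied to \eqref{O2}, and (4) from the distributional limit of the viscous equation.

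\textbf{Main obstacle.} The delicate step is uniqueness in Step~3: one must adapt Kruzkov's doubling-of-variables to a conservation law driven by the distribution-valued $\partial_t\xi$, so that in the contraction inequality between two candidate entropy solutions the forcing contribution cancels and only the standard flux term survives. This is exactly where the spatial smoothness hypothesis $B_5<\infty$ enters, and explains why the paper relies on the specific version of Kruzkov/Oleinik theory developed in \cite[Chapter~8]{BK}; once this contraction is in hand, the remaining compactness and passage-to-the-limit steps are robust consequences of the uniform Oleinik bounds.
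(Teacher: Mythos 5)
Your overall architecture --- pathwise reduction by subtracting the noise, compactness from uniform Oleinik bounds, identification of the limit as a Kruzkov entropy solution, uniqueness by doubling of variables --- is exactly the route the paper relies on (it defers the proof to \cite[Chapter~8]{BK}, ``following Kruzkov's approach, based on a version of Oleinik's estimates''). However, two of your steps fail as written.

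First, the almost-sure uniform-in-$\nu$ bound. Estimates \eqref{O1}--\eqref{O2} are moment bounds, $\EE|u_x^{\nu\,+}(t)|_\infty^q\le C\theta^{-q}$ with $C$ independent of $\nu$; Chebyshev plus Borel--Cantelli cannot upgrade these to $\sup_\nu\sup_{t\in[\theta,T]}\big(|u^\nu(t)|_\infty+|u^\nu_x(t)|_1\big)\le C(\om)<\infty$ a.s., because the tail probabilities $\PP\big(|u_x^{\nu\,+}(t)|_\infty>R\big)\le CR^{-q}$ do not decay along any countable family of $\nu$'s and $t$'s over which you could sum. What is actually needed (and what \cite{BK} establishes first, the moment bounds \eqref{O1}--\eqref{O2} being consequences) is the pathwise maximum-principle form of Oleinik's estimate, a bound on $\max_x u^\nu_x(t,x)^+$ of the form $C(\om)(1+t^{-1})$ with a random constant controlled by norms of $\xi$ and uniform in $\nu$; your compactness step must invoke that directly rather than try to derive it from the expectations.

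Second, the entropy inequality. The displayed identity $\partial_t\eta(u^\nu)+\partial_x q(u^\nu)-\nu\partial_{xx}\eta(u^\nu)+\nu\eta''(u^\nu)(u^\nu_x)^2=\eta'(u^\nu)\,\partial_t\xi$ applies the classical chain rule to the It\^o process $u^\nu$ and drops the It\^o correction $\tfrac12\eta''(u^\nu)\sum_s b_s^2 e_s(x)^2$. That term is $O(1)$ as $\nu\to0$, so it survives in the limit and changes the limiting entropy inequality; the doubling-of-variables cancellation you appeal to is then no longer the standard one. The fix is precisely the reduction your own first paragraph sets up and then abandons: carry out the entropy computation for $w^\nu=u^\nu-\xi$, which satisfies a pathwise PDE with no stochastic integral and flux $\tfrac12(w+\xi)^2$ depending on $(t,x)$ through $\xi$; Kruzkov's theory for fluxes smooth in $x$ (this is where \eqref{B5} with $M\ge5$ enters) then applies, and $u^0=w^0+\xi$ inherits items 1)--4) of the theorem. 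With these two repairs your sketch coincides with the cited proof.
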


Eq. \eqref{1.1}$\mid\!_{\nu=0}$ with a prescribed initial data has many generalised solution, but its entropy solution, defined by the limiting
construction above, exists and is unique. 
Entropy solutions   extend to a mixing process in $L_1$:

\begin{theorem}\label{t_33}
Solutions
 $u^0(t;u_0)$ extend by continuity in $u_0$
  to a Markov process in $L_1$ such that  for every $u_0 \in L_1$, $u^0(t;u_0)$ a.s. is continuous in $t$ and
  meets the estimate in  item 3)
 of Theorem~\ref{t_3}. This process is mixing: there is a measure $\mu_0 \in \cP(L_1)$, satisfying
 $\mu_0( \cap_{q<\infty} L_q)=1$,   such that for every r.v.   $u_0 \in L_1$, independent from $\xi$,
\be\label{0_mix}
\cD u^0(t; u_0) \strela \mu_0 \quad \text{in} \quad \cP(L_p) \quad \text{as}\;\; t\to \infty,
\ee
for any $p<\infty$.
If $\cD u_0 = \mu_0$, then solution $ u^0(t; u_0) $ is stationary:
$ \cD u^0(t; u_0)  \equiv \mu_0$.
\end{theorem}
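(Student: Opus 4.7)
The plan is (i) to extend the map $u_0\mapsto u^0(\cdot;u_0)$ from $H^2$ to $L_1$ by density, using a pathwise $L_1$-contraction inherited from the viscous flow, (ii) to check Markov-ness and the moment bound of item~3) of Theorem~\ref{t_3} for the extension, and (iii) to obtain the mixing \eqref{0_mix} as a $\nu\to 0$ limit of the $\nu$-independent mixing of \eqref{1.1} in $\cP(L_p)$. The starting point is the almost sure contraction
\[
|u^\nu(t;u_0) - u^\nu(t;v_0)|_1 \le |u_0-v_0|_1, \qquad t\ge 0,
\]
for the viscous equation: writing $w=u^\nu(t;u_0)-u^\nu(t;v_0)$, the additive noise cancels and $w$ solves the deterministic equation $w_t + \tfrac12\bigl((u^\nu+v^\nu)w\bigr)_x = \nu w_{xx}$; testing against a smooth approximation of $\mathrm{sgn}(w)$ yields the inequality by a Kruzkov-type computation. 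Passing to the limit $\nu\to 0$ via Theorem~\ref{t_3} transfers the contraction to the entropy map on $H^2$.

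Given $u_0\in L_1$, I approximate by $u_0^n\in H^2$ in $L_1$; the contraction makes $\{u^0(\cdot;u_0^n)\}$ almost surely Cauchy in $C([0,T];L_1)$ for every $T>0$, and $u^0(t;u_0)$ is defined as the limit, still continuous in $t$ and contractive in the datum. The estimate of item~3) is uniform in $u_0$, so by Fatou it propagates to all $u_0\in L_1$, giving in particular $u^0(t;u_0)\in\cap_{q<\infty}L_q$ a.s.\ for $t>0$. Markov-ness follows from the pathwise cocycle
\[
u^0(t+s;u_0,\om) = u^0\bigl(s;\,u^0(t;u_0,\om),\,\theta_t\om\bigr),
\]
which is immediate from uniqueness of the entropy solution along a fixed noise path, combined with the independence of increments of $\xi$.

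For mixing I would invoke the $\nu$-independent rate of convergence $\cD u^\nu(t;u_0)\strela\mu_\nu$ in $\cP(L_p)$ established in \cite[Chapter~4.2]{BK}. Oleinik's estimates yield tightness of $\{\mu_\nu\}_{0<\nu\le 1}$ in $L_p$ for every $p<\infty$; any subsequential weak limit $\mu_0$ along $\nu_k\to 0$ is then supported on $\cap_{q<\infty}L_q$ with all $q$-th moments finite. Applied to bounded continuous $f$ on $L_p$, the three-term split
\[
|\lan f,\cD u^0(t;u_0)\ran-\lan f,\mu_0\ran| \le |\lan f,\cD u^0(t)-\cD u^\nu(t)\ran| + |\lan f,\cD u^\nu(t)-\mu_\nu\ran| + |\lan f,\mu_\nu-\mu_0\ran|,
\]
with $\nu$ chosen small depending on $t$ so that the first and third terms are negligible, while the $\nu$-uniform middle term vanishes as $t\to\infty$, delivers \eqref{0_mix} for smooth $u_0$; the $L_1$-contraction extends it to arbitrary $u_0\in L_1$. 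Uniqueness of $\mu_0$ follows by feeding any inviscid-invariant measure as initial distribution, since the resulting stationary process must also converge to $\mu_0$. The main technical obstacle lies in this last step: Theorem~\ref{t_3} only gives pathwise convergence on bounded time intervals and for smooth data, whereas mixing demands uniformity as $t\to\infty$, so one must carefully tune $\nu=\nu(t)\to 0$ and control the first term using Oleinik-based moment bounds that depend on $|u_0|_1$ only through a bounded set.
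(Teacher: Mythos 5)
The paper does not prove Theorem~\ref{t_33} at all: it is quoted from \cite[Chapter 8.5]{BK} (with \cite{Sin, BKh} mentioned for an alternative route to the stationary measure), so there is no in-text argument to compare against. Your reconstruction is sound and follows the same architecture as the cited source: the pathwise $L_1$-contraction for the viscous flow (the additive noise cancels in the difference equation and the Kruzkov $\mathrm{sgn}$-test closes the estimate), its transfer to the entropy map via Theorem~\ref{t_3}, extension to $L_1$ data by density, the cocycle/uniqueness argument for the Markov property, and the inviscid limit of the $\nu$-uniform mixing in $\cP(L_p)$ that the paper itself advertises in Section~\ref{s_13}. Two steps deserve tightening. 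First, the extension of \eqref{0_mix} from smooth to arbitrary $u_0\in L_1$: the contraction only controls $|u^0(t;u_0)-u^0(t;u_0')|_1$, while the convergence is asserted in $\cP(L_p)$ for every $p<\infty$; you need an interpolation such as $|v-w|_p\le |v-w|_1^{\theta}\,(|v|_q+|w|_q)^{1-\theta}$ with $q>p$, combined with the uniform moment bound of item~3), to upgrade $L_1$-closeness of solutions to closeness of their laws in the dual-Lipschitz metric on $\cP(L_p)$. Second, tightness of $\{\mu_\nu\}$ in $L_p$ does not follow from moment bounds alone (bounded sets of $L_q$ are not precompact in $L_p$ on $S^1$); you must use the BV-type bound $\EE|u^\nu_x|_1\le C$ together with the $L_\infty$ bound from \eqref{O2}, so that the compact embedding of $BV\cap L_\infty$ into $L_p$ applies. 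With these repairs the proposal matches the standard proof in the literature the paper relies on.
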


See \cite[Chapter 8.5]{BK} and see \cite{Sin, BKh} for another approach to stationary solutions of the inviscid
Burgers equation \eqref{1.1}.

\section{Moments of  increments $u^\nu(t, x+r) - u^\nu(t,x)$ and a weak form of the 4/5-law for Burgers equation.}\label{s_3}
Let us adopt some more notation. For a function $v(x)$ on $S^1$ and $|l|<1$ we denote
$
v^l = {v(x+l)}
$
and set
$
\delta^l v(x) = v^l(x) - v(x).
$
The absolute moments of increments in $x$ of
 a solution $u^\nu(t,x)$ for \eqref{1.1} with respect to the brackets  $\llangle\cdot \rrangle$ and  averaging in $x$  are
$$
\llan \int | \delta^l u^\nu(t,x)|^p dx\,\rran  =
\llan\, | \delta^l  u^\nu(t)|_p^p\,\rran =: S_{p,l} =  S_{p,l}(u^\nu)<\infty, \quad p>0.
$$
Obviously if $u^\nu(t)$ is a stationary solution, then
$
S_{p,l}(u^\nu) = \EE  \int | \delta^l u^\nu(t,x)|^p dx\,.
$
The function
$
(p, l) \mapsto  S_{p,l}(u^\nu)
$
is called the {\it structure function} of a solution $u^\nu$.  Since the function
$
v(x) \mapsto | \delta^l  v |_p^p
$
is continuous on $L_{\max(1,p)}$, then in view of item 3) of Theorem~\ref{t_3}
 the structure function $S_{p,l}(u^0)$  for entropy solutions $u^0(t,x)$ also is well defined and finite.

Careful analysis of solutions $u^\nu$ with $\nu\ge0$
 and of estimates \eqref{O1},  \eqref{O2},  \eqref{Hm} implies  that for any random initial data
 $u_0\in H^1$ (independent from $\xi$) the structure function $S_{p,l}(u^\nu)$ obeys the following law:

\begin{theorem}\label{t_4}
There exist constants $c_*, c >0$ and  $c_1\ge1$, and for each $p>0$ there exists $C_p\ge1$, all
depending only on the random force in  eq. \eqref{1.1} and on $\theta, T_*$ as in Theorem~\ref{t_1},
 such that for each
$\nu \in [0, c_*]$ and $p>0$ we have:

1) if $l\in [c_1\nu, c]$, then
\be\label{s1}
C_{p}^{-1}  l^{\min(1,p)}\le   S_{p,l}(u^\nu) \le C_{p}\, l^{\min(1,p)};
\ee

2) if $l\in [0, c_1\nu)$, then
$$
C_{p}^{-1}   l^p \nu^{1-\max(1,p)} \le     S_{p,l}(u^\nu) \le C_{p}\,  l^p \nu^{1-\max(1,p)}
$$
(for $\nu=0$ this assertion is empty).
\end{theorem}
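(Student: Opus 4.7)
The plan is to prove upper and lower bounds separately in the inertial range $[c_1\nu,c]$ and in the dissipative range $[0,c_1\nu)$, using as inputs the Oleinik estimates \eqref{O1}--\eqref{O2}, the Sobolev-norm controls of Theorem~\ref{t_1}, and the energy balance \eqref{be}. For the upper bound in the inertial range, I would split $\delta^l u = (\delta^l u)^+ - (\delta^l u)^-$. Oleinik bounds $(\delta^l u)^+(x) \le \int_x^{x+l} u_y^+(y)\,dy \le l\,|u_x^+|_\infty$ pointwise, so after expectation via \eqref{O1} the $L^p$-mass of the positive part is $\le C_p l^p \le C'_p l^{\min(1,p)}$. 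The zero-mean property forces $\int(\delta^l u)^- dx = \int(\delta^l u)^+ dx \le l\,|u_x^+|_\infty$, so $\llan \EE \int(\delta^l u)^- dx\rran \le Cl$. For $p\ge 1$, pairing this $L^1$-bound with $(\delta^l u)^-\le 2|u|_\infty$ from \eqref{O2} gives $\int((\delta^l u)^-)^p dx \le C_p l$ after $\llan\cdot\rran$-averaging; for $0<p<1$, concavity of $t\mapsto t^p$ and Jensen's inequality on the probability space $S^1$ yield $\int((\delta^l u)^-)^p dx \le (\int(\delta^l u)^- dx)^p \le (Cl)^p$. In the dissipative range, $|\delta^l u(x)|\le \int_x^{x+l}|u_y|\,dy$ combined with H\"older (or Jensen for $p<1$) reduces matters to bounding $\llan \EE\int|u_x|^p dx\rran$; decomposing $u_x=u_x^+-u_x^-$ and using the classical parabolic smoothing $|u_x^-|_\infty\lesssim 1/\nu$ together with the Oleinik $L^1$ bounds \eqref{O2} on $u_x^\pm$ yields $\llan\EE\int|u_x|^p dx\rran \lesssim \nu^{1-\max(1,p)}$, giving the required upper bound.

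The lower bounds form the substance of the theorem. In the inertial range, the lower bound in \eqref{Hm} with $m=1$, equivalently \eqref{be}, gives $\nu\llan\EE\int u_x^2 \,dx\rran \gtrsim 1$ uniformly in small $\nu$. Together with the $L^1$ bound $\llan\EE\int|u_x|\,dx\rran \le C$, a Chebyshev/layer-cake argument forces the typical solution to develop order-one downward jumps across shock-type regions of linear measure $\sim \nu$. For $l\ge c_1\nu$ with $c_1$ chosen large enough, each such shock produces an interval of length $\sim l$ on which $(\delta^l u)^- \gtrsim 1$, yielding $\int((\delta^l u)^-)^p dx \gtrsim l$ for all $p\ge 1$. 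For $0<p<1$ I would couple this with the fact that on a set of positive measure $u$ varies smoothly at order-one slope, giving $|\delta^l u|\gtrsim l$ on a set of positive measure and hence $\int|\delta^l u|^p dx \gtrsim l^p$. In the dissipative range the lower bound follows by Jensen's inequality (convexity for $p\ge 1$, concavity for $p<1$) applied either to the Sobolev-norm lower bound in \eqref{Hm} or by matching at the endpoint $l=c_1\nu$.

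The hardest part is the shock-counting step underlying the inertial-range lower bound: uniformly in $\nu\in(0,c_*]$ and in the random initial data $u_0\in H^1$ independent of $\xi$, one must certify that with sufficient probability the solution carries at least one order-one downward jump concentrated on a window of width $\sim \nu$. This is the technically deepest point and follows the line of analysis developed in \cite[Chapter~4]{BK}, where Oleinik's one-sided estimate, the $L^2$-dissipation lower bound from \eqref{Hm}, and the mixing property of Section~\ref{s_13} are combined to secure the needed uniformity; this is also what fixes the constants $c_*, c, c_1$ in terms of the forcing spectrum $\{b_s\}$ (equivalently, $B_0,\dots,B_M$).
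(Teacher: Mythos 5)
The first thing to note is that the paper does not prove Theorem~\ref{t_4}: it is quoted as a known result, with the rigorous proof attributed to \cite{Bor} and to \cite[Chapter~7.2]{BK} (not Chapter~4, as you cite). So there is no in-paper argument to compare against, and your proposal must be judged against those references. Your upper bounds are essentially the standard ones and are sound in outline: the splitting $\delta^l u=(\delta^l u)^+-(\delta^l u)^-$, the Oleinik control $(\delta^l u)^+\le l\,|u_x^+|_\infty$, the zero-mean identity $\int(\delta^l u)^-dx=\int(\delta^l u)^+dx$, interpolation against $|u|_\infty$ for $p\ge1$ and Jensen for $p<1$, and, in the dissipation range, $|\delta^l u|\le\int_x^{x+l}|u_y|\,dy$ combined with $\llan\int|u_x|^p dx\rran\lesssim\nu^{1-\max(1,p)}$ --- this is exactly the route of \cite{Bor,BK}.

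The lower bounds, which you rightly call the substance of the theorem, are where your proposal has genuine gaps. (i) In the inertial range you reduce everything to a ``shock-counting'' claim --- that with uniform probability the solution carries an order-one downward jump concentrated on a window of width $\sim\nu$ --- and then defer that claim to the literature. That claim \emph{is} the hard content of the theorem, so as written this step is an assertion rather than a proof; moreover it is not literally what the cited argument establishes, which never isolates individual shocks but works with integrated quantities (the oscillation of $u$, controlled from below via $\llan\|u\|^2\rran\sim1$, the identity $\int|u_x|dx=2\int u_x^+dx$, and the Oleinik bound on $u_x^+$) to get $\llan\int(\delta^l u)^+dx\rran\gtrsim l$, and then passes to general $p$ by H\"older together with the upper bounds. (ii) The dissipation-range lower bound as you describe it would fail: Jensen applied to $S_{1,l}\gtrsim l$ gives only $S_{p,l}\gtrsim l^p$ for $p\ge1$, which misses the large factor $\nu^{1-p}$, and ``matching at the endpoint $l=c_1\nu$'' yields nothing for $l<c_1\nu$ since $S_{p,l}$ has no usable monotonicity in $l$. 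The correct argument linearizes, $\delta^l u(x)=l\,u_x(x)+O\big(l^{3/2}\|u\|_2\big)$, shows the remainder is negligible for $l\ll\nu$ using $\llan\|u\|_2^2\rran\lesssim\nu^{-3}$ from \eqref{Hm}, and requires a separate lower bound $\llan\int|u_x|^pdx\rran\gtrsim\nu^{1-\max(1,p)}$, obtained by interpolating $\nu\llan\|u\|_1^2\rran\sim1$ against $|u_x|_1\lesssim1$ and $|u_x|_\infty\lesssim\nu^{-1}$. Neither ingredient appears in your sketch.
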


We see from this result that statistically the increments $| \delta^lu^\nu(t)|$, $\nu>0$, with
$l\in [0, c_1\nu]$ behave  ``linearly in $l$", while for $l\in [c_1\nu, c]$ their behaviour is non-linear. So
the interval $ [0, c_1\nu]$ is the {\it dissipation range} for burgulence, described by the Burgers equation \eqref{1.1}, while
$ [c_1\nu, c]$ is the {\it inertial range}. The frontier $c_1\nu$  between the two interval is the {\it dissipation} or {\it inner} scale of the flow.
The constants $c_1$ and $c$, depending on the random force, may change from one group of results to another.

On the physical level of rigour the first assertion  of the theorem
was proved in \cite{FFF}. Rigorously for $\nu>0$  it was established in \cite{Bor}, using some ides from \cite{FFF}.
 For a complete proof of assertions  1) and 2) see \cite[Chapter~7.2]{BK}.
 \smallskip

Now we start to discuss moments (not absolute ones) of increments $\delta^l u^\nu$ of solutions  $u^\nu(t;u_0)$
  as above:
\be\label{S3}
S^s_{p,l}(u^\nu) = \llan s_{p,l}(u^\nu(t))\rran, \quad  s_{p,l} (v) =
 \int (\delta^l  v^\nu(x))^p dx, \quad \nu>0, \; \;0 \le l<1,
\ee
where $p\in\N$ (the upper index $s$ stands for ``skew"). If $p$ is an even number then $S^s_{p,l} = S_{p,l}$, but for an odd $p$
the two moments are different. As $S^s_{1,l}=0$, then the first non-trivial  skew moment is the third one.  Let us
examine it.  Since any real number $x$ may be written as $x= 2x^+ - |x|$, then
 \be\label{s3}
 S^s_{3,l} = - S_{3,l} +2 \llan \int \big( \delta^l u^\nu)^+\big)^3 dx \rran.
 \ee
 But for any $x$,
 $
 \big(\delta^l u^\nu(x)\big)^+ \le \int_x^{x+l} (u_x^\nu)^+ (t,y) dy \le l\, | {(u_x^\nu)}^+|_\infty.
 $
 So in view of \eqref{O1} the second term in the r.h.s. of \eqref{s3} is bounded by $Cl^3$, uniformly in $\nu>0$. From here, \eqref{s3}
 and \eqref{s1} with $p=3$ we get that for a suitable $c'>0$ and all $\nu\in(0, c_*]$,
 \be\label{B451}
 -C'_1 l \le  S^s_{3,l}(u^\nu) \le -C'_2 l
  \quad \text{if} \quad l\in [c_1\nu, c'],
 \ee
 for some $C'_1 \ge C'_2>0$, independent from $\nu>0$ and solution $u^\nu$.

 Since in view of \eqref{O2} with $p=4$ the family of functions
 $
 \int (\delta^l  u^\nu(t,x))^3dx$, $\nu>0,
 $
 is uniformly integrable in variables
 $
 (t, \om) \in [\sigma, \sigma +T]\times \Omega,
 $
 then passing to the limit in \eqref{B451}   as $\nu\to0$ using item 2) of
 Theorem~\ref{t_3} with $p=3$ we find that  entropy
 solutions  $u^0= u^0(t;u_0)$, $u_0\in H^2$,  satisfy
 \be\label{B452}
  -C'_1 l \le  S^s_{3,l}(u^0) \le -C'_2 l
  \quad \text{if} \quad l\in [0, c'].
 \ee
 Relation \eqref{B451} + \eqref{B452},
  valid for all small $\nu\ge0$, is a weak form  {of} the 4/5-law \eqref{K45} for Burgers equation. Literally the same argument    shows that the two relations  hold for all moments $ S^s_{p,l}$ with odd $p\ge3$ (the
  constants $C'_1, C'_2$ should be modified).

 In the next section we will see that some ideas, originated in K41, allow to strengthen  the asymptotic
 behaviour \eqref{B451} for $S^s_{3,l}$  (as $\nu\to0$)  with  $l$  in the inertial range to a real asymptotic, if $l$ belongs not to the whole inertial range, but to some large part of the latter.

 \section{Strong form of the 4/5-law for Burgers  {equation}}  \label{s_4}
 Assuming that in \eqref{K45} the velocity field $u$ is homogeneous and isotropic
 in $x$ (not necessarily stationary in $t$), for any $p\in\N$ define the $p$-th moment $ S_p^{||}(t,r)$ of a longitudinal increment
 $
 (u(t,x+r)-u(t,x))\cdot(r/|r|)
 $
 as
 $ \EE \big[ \big(u(t,x+r)-u(t,x)\big)\cdot ({r}/{|r|})\big]^p$
 (so the l.h.s. of \eqref{K45} is $S_3^{||}(t,r)$).  Following Kolmogorov,
  proofs of the 4/5-law in physical works, e.g. in \cite{Fr, Fal}, as well as in the rigorous paper \cite{Bedr},\footnote{There, to adjust the
  formula to periodic boundary conditions, it is integrated in $dr$ with suitable densities. It turns out that thus obtained ``weak KHM formula" is
  sufficient for a ``conditional" derivation of relation \eqref{K45}.}
  crucially use the Karman--Howard--Monin  formula  (which is rather a class of formulas, see for them
   the references above and relation (5.5.5) in   \cite{Bat}). The formula
    relates  time-derivative of the second moment
    $S_2^{||}(t,r)$ with derivatives in $r$  of the third moment  $S_3^{||}(t,r)$. Variations of the  formula (e.g. see in \cite{Fal})   instead  of
      the second moments
  $S_2^{||}$ analyse the   correlations $\EE u(t,x) \cdot u(t, x+r)$, closely related to $S_2^{||}$.
  Thus motivated let us examine time-derivatives
 of   correlations of a solution  $u^ \nu(t,x) = u^ \nu(t,x;u_0)$ of Burgers equation with $\nu>0$ and a non-random initial data
  $u_0\in H^{M-1}$, i.e. of
 $$
 \int u^\nu(t,x) u^{\nu l} (t,x) \,dx =: f^l(u^\nu(t)).
 $$
  Abbreviating $u^\nu(t) $ to $u(t) $, formally  applying  the Ito formula to $f^l(u(t))$ and taking the expectation we get:
 \be\label{I1}
 \begin{split}
 \frac{d}{dt} \EE f^l(u(t)) = \EE\big( -df^l(u)(uu_x) +\nu\, df^l(u)(u_{xx}) &+ \tfrac12 \sum b_s^2 d^2 f^l(u)(e_s,e_s)\big) \\
 & =: \EE( -I_1(t) +I_2(t) +I_3(t)).
 \end{split}
 \ee
 Since
 $
 df^l(u)v =\int (uv^l+u^lv)dx  =\int (uv^l+u v^{-l})dx
 $
 and
 \be\label{as}
 (\p/ \p l) u^l(x) = u_x^l(x),
 \ee
 we get that
 $$
 I_1(t) = \int (u u^l u_x^l + uu^{-l} u^{-l}_x)dx = \tfrac12 \tfrac{\p}{\p l} \int\big (u (u^l)^2 - u(u^{-l})^2\big) dx=
 \tfrac12 \tfrac{\p}{\p l} \int \big(u (u^l)^2 - u^l u^2\big) dx.
 $$
 Recalling that the functional $s_{3,l}$ was defined in \eqref{S3},  we have
 $\
 s_{3,l} (v(x)) = \int \big( \delta^l v(x)\big)^3dx  = 3 \int \big( v^l v^2 - (v^l)^2 v \big)dx.
 $
 So
 $$
 I_1(t) = - \tfrac16 \tfrac{\p}{\p l} s_{3,l} (u(t)) .
 $$

 Similar, using \eqref{as} we get
 \[
 I_2(t) 
   =\nu \int\!\! \big( uu^l_{xx} + u u^{-l} _{xx}\big)dx
  = \nu \frac{\p^2}{\p l^2} \int\!\! \big( u u^l + u u^{-l}\big) dx = 2\nu  \frac{\p^2}{\p l^2} f^l(u(t)).
 \]

 Since $d^2 f^l (u)(v,v) = 2 f^l(v)$, then relation \eqref{I1} may be re-written as
 \be\label{I2}
  \frac{d}{dt} \EE f^l(u(t))  = \tfrac16 \EE \frac{\p}{\p l} s_{3,l}(u(t)) + 2\nu \EE  \frac{\p^2}{\p l^2} f^l(u(t)) + \tilde B_0(l),
 \ee
where
$\
\tilde B_0(l) := \sum_s b_s^2 f^l(e_s) = \sum_s b_s^2 \cos(2\pi sl).
$

We have obtained \eqref{I1} by a formal application of Ito's formula to the infinite-dimensional stochastic process
 $u^\nu(t;u_0) \in H^{M-1}$ with $u_0 \in  H^{M-1}$ .
  But Galerkin's approximations $u^{(N)}(t)$ to solutions $u^\nu(t) $ satisfy finite-dimensional stochastic systems. Estimates
\eqref{es1}, \eqref{es2} also hold for them and imply the validity of Ito's formula for $u^{(N)}$'s. The latter
 has the form \eqref{I1} with Ito's term $\EE I_3(t)$  modified to $\EE  \tfrac12 \sum_{|s|\le N} b_s^2 d^2 f^l(u)(e_s,e_s)$.
 Then passing to a limit as $N\to\infty$ using \eqref{Gal} and  the uniform in $N$ estimates we justify the validity of \eqref{I1}
for $u(t) =u^\nu(t;u_0)$. (Doing that we write the Ito
 equation in the integrated in time form.)  Cf. \cite{BK}, where the energy balance \eqref{be}
is established in a similar way. Since solutions $u^\nu(t) \in H^{M-1}$ meet estimates \eqref{es1}, \eqref{es2} with $m=M-1\ge4$, then the given above
formal transformation from \eqref{I1} to \eqref{I2} also is rigorous.

 Relation  \eqref{I2}  is a version of the Karman--Howarth--Monin formula for
the stochastic Burgers equation.

Now let $\mu_\nu \in \cP(H^{M+1})$ be the stationary measure for eq. \eqref{1.1} (see Section~\ref{s_13}), and let $u^{\nu \,st}(t)$ be a corresponding
stationary solution. Then
 $u^{\nu \,st}(t)= u^\nu(t;u_0)$, where  $\cD u_0 = \mu_\nu$.
Using estimate \eqref{O2}, where $u^\nu=u^{\nu \,st}$,
 we see that all terms in the integrated in time
 relation \eqref{I2} with  $u(t) = u^\nu(t; u_0)$, are integrable in $\mu_\nu(du_0)$. Performing this integration we get that  equality
 \eqref{I2} stays true for  $u=u^{\nu \,st}(t) $. Then the l.h.s. of \eqref{I2} vanishes, so the relation takes form
 $$
 (\p /\p l) \EE \big(s_{3,l}( u^{\nu \,st}(t) )\big) = -12 \nu  (\p^2 /\p l^2) \EE \big(f^l (u^{\nu \,st}(t) )\big) -6 \tilde B_0(l).
 $$
 Since $s_{3,0}( u(x))\equiv0$ and since by \eqref{as}
 $
 (\p/\p l) f^l(u)\!\mid\!_{l=0} = \int u(x) u_x(x) dx =0,
 $
 then integrating this equality in $dl$ we find that
 \be\label{I3}
  \EE \big(s_{3,l}( u^{\nu \,st}(t) )\big) = -12 \nu  (\p /\p l) \EE \big(f^l (u^{\nu \,st}(t) )\big) -6 \int_0^l \tilde B_0( {r}) d {r}.
 \ee
 Next, convergence \eqref{mix}, estimate  \eqref{O2}, item 1) of Theorem~\ref{t_1}  and Fatou's lemma imply that
  for all $\nu>0$,
 \be\label{I4}
 \EE \| u^{\nu  st} (t)\|_1^2 \le C\nu^{-1},
 \ee
 for all $\nu>0$ and a suitable $C$.
 Consider the first term in the r.h.s of \eqref{I3}. Dropping   the factor  $-12\nu$  and using \eqref{as} we write its modulus as
 \[
 \begin{split}
\big| \EE \int u u_x^l dx\big|  &=  \big| \EE \int (u(t,x) - u(t,x+l)) u_x(x+l) dx\big|  \\
  &\le
 \Big[ \EE \int\big( u(t,x) -u(t,x+l)\big)^2dx\Big]^{1/2}  \Big[ \EE \int  u_x(t,x)^2dx\Big]^{1/2}
 \end{split}
 \]
 (we used that $\int u(x+l) u_x(x+l)dx=0$).
 Since $u$ is a stationary solution, then the first factor in the r.h.s. equals $S_{2, l}^{1/2}(u^{\nu \,st})$. So in view of
 Theorem~\ref{t_4} and estimate \eqref{I4} the first term in the r.h.s. of \eqref{I3} is
 $
 O\big(\sqrt{l} \sqrt\nu\big).
 $

 In view of \eqref{B5}, $\tilde B_0$ is an even $C^2$-function. Since $\tilde B_0(0) = B_0$, then
 $
 \int_0^l \tilde B_0( {r}) d {r} = B_0 l+ O(l^3).
 $
 Using in \eqref{I3} the estimates for the terms in its r.h.s. which we have just obtained we find that
 \be\label{I5}
  \EE \big(s_{3,l}( u^{\nu \,st}(t) )\big) = -6 B_0 l+ O(l^3)  +  O\big(\sqrt{l} \sqrt\nu\big).
 \ee

 By relation (8.5.4) in \cite{BK},  $\mu_\nu \strela \mu_0$ in $\cP(L_3)$ as $\nu\to0$.  Next,  by convergence \eqref{mix} and estimate \eqref{O2}
 (with $q=4$),
 $
 \lan |u|_3^4, \mu_\nu \ran \le C
 $
 uniformly in $\nu>0$ (e.g. see \cite[Corollary~11.1.7]{BK}).
 Since functional $s_{3,l}$ is continuous on $L_3$ and $s_{3,l} (u) \le C |u|_3^3$, then we derive from here that
 $$
 \lim_{\nu\to0} \lan s_{3,l}, \mu_\nu\ran = \lan s_{3,l}, \mu_0\ran .
 $$
 Let $u^{0\, st} (t) $ be a stationary entropy solutions of eq. \eqref{1.1}$\mid\!_{\nu=0}$,  $\cD( u^{0\, st} (t)) \equiv \mu_0$ (see Theorem~\ref{t_33}).
 Then
 $
 \lan s_{3,l}, \mu_0\ran =
 \EE s_{3,l}( u^{0\, st} (t) ).
 $
 So passing in \eqref{I5} to the limit as $\nu\to0$, we get that
 \be\label{I6}
  \EE \big(s_{3,l}( u^{0\, st}(t) )\big) = -6 B_0 {l} + O(l^3).
   \ee

   If in \eqref{I5} $l$ belongs to the inertial range $[c_1\nu, c]$, then the norm of third term in the r.h.s. of \eqref{I5} is
   bounded by $C c_1^{-1/2} l$. Assuming that $c_1$ is sufficiently big, we obtain from  \eqref{I5} another proof of
    the weak law \eqref{B451} for stationary
   solutions $u^{\nu \,st}(t)$ (since
   $
   S^s_{3,l}(u^{\nu \,st}) = \EE  s_{3,l}(u^{\nu \,st}(t)) ).
   $

   Now let $l$ belongs to a ``strongly inertial range", i.e. 
   \be\label{strong}
   l \in [L(\nu), c],
   \ee
   for any fixed function $L(\nu)$ such that
   $$
    L(\nu) \to 0 \;\;\text{and}\;\;
    L(\nu) /\nu \to \infty\;\; \text{as}\; \nu\to0.
   $$ 
 Then $\sqrt{l} \sqrt\nu =o(l)$ as $\nu\to0$ and  we arrive at the main  result of this work:

\begin{theorem}\label{t_final}
Let $ u^{\nu \,st}(t) $, $\nu>0$, be a stationary solution of eq. \eqref{1.1} and $l$ satisfies \eqref{strong}. Then
 \be\label{I55}
 S^s_{3,l}( u^{\nu \,st}(t) )=
  \EE \big( s_{3,l}( u^{\nu \,st}(t)) \big) = -6 B_0 l+ o(l) \;\; \text{as} \; \nu\to0,
 \ee
 where  $o(l)$ depends only on the function $L(\nu)$ 	and the random force $\xi$. While
 the stationary entropy solution $ u^{0\, st}(t) $ satisfies \eqref{I6}.\footnote{Equivalently,
$
\lan   s_{3l}, \mu_0\ran  = -6 B_0  l+ O(l^3)
$ as $l\to0$. }
\end{theorem}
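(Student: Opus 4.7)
My strategy is to derive a Karman--Howarth--Monin-type identity for the equal-time two-point correlation $f^l(u):=\int u(x)u(x+l)\,dx$ of a stationary solution, and then read off the asymptotics of $\EE s_{3,l}(u^{\nu\,st})$ from that identity by combining the structure-function bounds of Theorem~\ref{t_4} with the $H^1$-bound of item~1) of Theorem~\ref{t_1}.

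First I would apply Ito's formula to $f^l(u^\nu(t;u_0))$ for smooth non-random $u_0$, done rigorously via the Galerkin approximations $u^{(N)}$ exactly as for the energy balance \eqref{be}, passing to the limit using the uniform bounds \eqref{es1}, \eqref{es2} and the convergence \eqref{Gal}. Using $(\partial/\partial l)u^l=u_x^l$, the transport term rewrites as $-\tfrac16 \partial_l s_{3,l}(u)$, the viscous term as $2\nu\partial_l^2 f^l(u)$, and the Ito correction as $\tilde B_0(l)=\sum b_s^2\cos(2\pi sl)$, yielding \eqref{I2}. Specializing to $u^{\nu\,st}$ kills the time derivative, and integrating in $l$ from $0$ (using $s_{3,0}\equiv 0$ and $\partial_l f^l(u)|_{l=0}=\int u u_x\,dx=0$) gives \eqref{I3}.

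Next I would control the viscous remainder: rewriting $\EE\int u\,u_x^l\,dx=\EE\int(u(x)-u(x+l))u_x(x+l)\,dx$ (exploiting the vanishing integral $\int u u_x\,dx=0$), Cauchy--Schwarz bounds its modulus by $S_{2,l}^{1/2}(u^{\nu\,st})\bigl(\EE\|u^{\nu\,st}\|_1^2\bigr)^{1/2}$. Theorem~\ref{t_4} with $p=2$ yields $S_{2,l}^{1/2}=O(\sqrt l)$, while \eqref{I4} provides $\EE\|u^{\nu\,st}\|_1^2=O(\nu^{-1})$; multiplying by the prefactor $-12\nu$ produces the error $O(\sqrt{l\nu})$. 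A Taylor expansion of the even $C^2$ function $\tilde B_0$ around $0$ gives $\int_0^l\tilde B_0(r)\,dr=B_0 l+O(l^3)$, and these two estimates combine into \eqref{I5}. On the strongly inertial range $l\in[L(\nu),c]$ the ratio $\sqrt{l\nu}/l=\sqrt{\nu/l}\le\sqrt{\nu/L(\nu)}\to 0$, so the viscous error is $o(l)$ uniformly in $l$, producing \eqref{I55}. For the inviscid statement \eqref{I6} I would pass $\nu\to 0$ in \eqref{I5}: the convergence $\mu_\nu\strela\mu_0$ in $\cP(L_3)$ together with the uniform moment bound $\lan|u|_3^4,\mu_\nu\ran\le C$ (from \eqref{O2}) gives uniform integrability, so $\lan s_{3,l},\mu_\nu\ran\to\lan s_{3,l},\mu_0\ran$ and the $O(\sqrt{l\nu})$ term vanishes, leaving $-6B_0 l+O(l^3)$.

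The main obstacle I anticipate is the rigorous justification of Ito's formula for the functional $f^l$, which is not a cylinder function on $H$: one must run the computation on the finite-dimensional Galerkin systems, check that the Ito correction $\tfrac12\sum_{|s|\le N}b_s^2 d^2 f^l(u)(e_s,e_s)$ converges to $\tilde B_0(l)$ as $N\to\infty$, and verify that all three terms in \eqref{I1} pass to the limit under the expectation thanks to the uniform-in-$N$ versions of \eqref{es1}--\eqref{es2}. Once \eqref{I2} is secured in the infinite-dimensional setting, the rest is a clean Cauchy--Schwarz estimate and a weak-convergence argument, both of which use only inputs already recorded in Sections~\ref{s_2}--\ref{s_3}.
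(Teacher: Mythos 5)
Your proposal is correct and follows essentially the same route as the paper: the Ito/Galerkin derivation of the Karman--Howarth--Monin identity \eqref{I2}, the integration in $l$ for the stationary solution, the Cauchy--Schwarz bound $O(\sqrt{l}\sqrt{\nu})$ on the viscous term via Theorem~\ref{t_4} and \eqref{I4}, the Taylor expansion of $\tilde B_0$, and the weak-convergence argument $\mu_\nu\strela\mu_0$ in $\cP(L_3)$ for the inviscid statement. No substantive differences to report.
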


Due to the balance relation \eqref{be}, for  stationary solution $ u^{\nu \,st}$ with $\nu>0$ the rate of dissi\-pation of energy is given by
\be\label{epsB}
\eps^B = \tfrac12 B_0.
\ee
  So \eqref{I55} may be written as
\be\label{better}
  S^s_{3,l} ( u^{\nu \,st}(t) ) = - 12 \eps^B l + o(l)  \quad \text{as} \quad \nu \to 0.
\ee

Combining the theorem's result with \eqref{mix} and \eqref{0_mix} we get

\begin{corollary}\label{c_4.2} 1) Let $\nu>0$, and $u_0\in H^1$ be a r.v., independent from $\xi$. Then for any $l$
as in  \eqref{strong} we have
$$
\lim_{t\to\infty} \EE \big(s_{3,l}( u^{\nu }(t;u_0) )\big) = -6 B_0 l+ o(l) \;\; \text{as} \; \nu\to0.
$$

2) If $\nu=0$ and   $u_0\in L_1$ is a r.v., independent from $\xi$, then
$$
 \lim_{t\to\infty} \EE \big(s_{3,l}( u^{0}(t;u_0) )\big) = -6 B_0 {l} + O(l^3) \;\; \text{as} \; l \to0.
$$
\end{corollary}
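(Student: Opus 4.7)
The plan is to reduce both assertions to Theorem~\ref{t_final} by letting $t\to\infty$, using the mixing properties of the two Markov processes. The key observation is that $s_{3,l}$ is unbounded on its natural domain, so plain weak convergence of laws will not suffice and must be reinforced by uniform integrability coming from Oleinik's estimates.

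For assertion 1), I would start from \eqref{mix}, which gives $\cD u^\nu(t;u_0)\strela \mu_\nu$ in $\cP(H^{M-1})$ and therefore in $\cP(L_3)$. Since $s_{3,l}:L_3\to\R$ is continuous with the elementary bound $|s_{3,l}(v)|\le 8|v|_3^3\le 8|v|_\infty^3$, estimate \eqref{O2} with $q=4$ yields $\EE|s_{3,l}(u^\nu(t;u_0))|^{4/3}\le C$ uniformly in $t\ge\theta$ and in $\nu\in(0,1]$. This uniform integrability upgrades the distributional convergence to convergence of expectations:
\[
\lim_{t\to\infty}\EE\, s_{3,l}(u^\nu(t;u_0)) = \langle s_{3,l},\mu_\nu\rangle = \EE\, s_{3,l}(u^{\nu\,st}(t)).
\]
Substituting \eqref{I55} then produces the claimed asymptotic for every $l$ in the strongly inertial range \eqref{strong}.

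For assertion 2), I would proceed identically, replacing \eqref{mix} by the inviscid mixing \eqref{0_mix} of Theorem~\ref{t_33}: the convergence $\cD u^0(t;u_0)\strela \mu_0$ holds in $\cP(L_3)$, continuity of $s_{3,l}$ on $L_3$ applies unchanged, and the required uniform integrability is furnished by item~3) of Theorem~\ref{t_3}. A small but necessary remark here is that item~3) is stated for non-random $u_0\in H^2$, so one needs the extension-by-continuity clause of Theorem~\ref{t_33} to transport the moment bound to an arbitrary random $u_0\in L_1$ independent from $\xi$. The resulting limit $\langle s_{3,l},\mu_0\rangle = \EE\, s_{3,l}(u^{0\,st}(t))$ is exactly the object evaluated in \eqref{I6} and in the footnote of Theorem~\ref{t_final}, giving $-6B_0 l + O(l^3)$.

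The only genuine step requiring thought is the uniform integrability that allows passage from weak convergence of measures to convergence of expectations of the unbounded observable $s_{3,l}$; this is the main, and essentially the sole, obstacle. In both parts it is settled by the $\nu$-uniform $L_\infty$-moment bounds of the Oleinik estimates \eqref{O2} and their inviscid counterpart in item~3) of Theorem~\ref{t_3}, after which the rest of the argument is a mechanical composition of mixing, continuity of $s_{3,l}$ on $L_3$, and Theorem~\ref{t_final}.
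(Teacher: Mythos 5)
Your argument is correct and is exactly the route the paper intends: the corollary is stated there with the one-line justification ``combining the theorem's result with \eqref{mix} and \eqref{0_mix}'', and your uniform-integrability step (continuity of $s_{3,l}$ on $L_3$ plus the $\nu$-uniform moment bounds from \eqref{O2} and item~3) of Theorem~\ref{t_3}) mirrors precisely the argument the paper itself uses just before \eqref{I6} to pass from $\mu_\nu$ to $\mu_0$. You have merely made explicit the details the paper leaves implicit, including the correct observation that Theorem~\ref{t_33}'s extension clause is what transports the moment bound to general $u_0\in L_1$.
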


The proof of Theorem  \ref{t_final} crucially uses that the moment  which we analyse is cubic. Indeed, for the proof the Ito term $I_3$ should be a constant, for
that the functional $f^l$ should be quadratic, and then the  term $I_1$ in \eqref{better} is of the third order in $u$.
But this does not imply that moments $S^s_{p,l}$
with integers $p\ne 3$ do not admit asymptotic expansions in $l$. In the next section  we show that an asymptotic expansion of $S^s_{p,l}$ with an integer
$p\ge2$, $p\ne3$,
is not possible if  in addition we require that its leading term   ``is universal".

\begin{remark}
 While suitable analogies of Theorems \ref{t_1}, \ref{t_3}, \ref{t_4} and of the weak 4/5-law \eqref{B452} hold for
solutions of the free Burgers equation \eqref{1.1}${}_{\xi=0}$ with a non-zero smooth initial data
 (see \cite[Section~10.11]{BK}), we see no way to establish for the latter
equation a reasonable analogy of Theorem~\ref{t_final}.
\end{remark}

\section{On the Landau objection to universality in K41 and burgulence}  \label{s_5}
The celebrated 2/3-law of K41 states that for turbulent velocity fields $u(t,x)$ as those, treated by the theory,
the second moments $S_2^{||}(r)$\,\footnote{The moments do not depend on $t$ by the assumed stationarity
of $u$.}
of longitudinal increments of $u$ behave as $(\eps |r|)^{2/3}$. Originally Kolmogorov insisted on the universality of
the law and claimed that
$
S_2^{||}(r) = C^K (\eps |r|)^{2/3} + o\big((\eps |r|)^{2/3} \big),
$
where $C^K$ is an absolute constant. But this universality  was put in doubt by Landau who suggested a
physical argument,
implying that a relation for a moment of velocity increment may be universal only if the
value of the moment, suggested by the relation,
 is linear in the rate of energy dissipation $\eps$ (like relation \eqref{K45} for the third moment).
See in \cite{LL} a footnote at page 126 and see  \cite[Section~6.4]{Fr}. The goal of this section is to show that  for
burgulence, indeed, the only universal relation for the moments $S^s_{p,l}$  is relation
\eqref{better} for the cubic one (which is linear in $\eps^B$).

 Namely, for a stationary solution $u^{\nu \,st}(t,x)$ of stochastic Burgers equation \eqref{1.1} and an integer $p\ge2$  consider the following relation for the  $p$-th moment $S^s_{p,l}$   of  increments of $u^{\nu \,st}$:
\be\label{L}
 S^s_{p,l}( u^{\nu \,st}(t) )= C_* (\varepsilon^B l)^{q}+ o(\varepsilon^B l)^{q} \;\; \text{as} \; \nu\to0,
 \ee
where $l$ is any number from  the inertial range $[c_1\nu, c]$ and  $q >0$.
 We address the following question: for which
$p$ and $q$ relation \eqref{L} holds with a \textit{universal} constant $C_*$,
independent from  the random force $\xi$?

\begin{theorem}\label{t_L}
  If relation \eqref{L} holds for any random force $\xi$, satisfying \eqref{B5},  with a $C_*$, independent from $\xi$,
   then
$$p=3,~q=1,~C_*=-12.$$
\end{theorem}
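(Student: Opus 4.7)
The plan is to combine a Brownian-type scaling invariance of equation \eqref{1.1} with the structure function bounds of Theorem~\ref{t_4} and the weak 4/5-law of Section~\ref{s_3}.

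First I would establish a scaling invariance. Fix $\alpha>0$ and set $v(t,x):=\alpha\,u(\alpha t,x)$. Using the Brownian scaling $\beta_s(\alpha t)\stackrel{d}{=}\sqrt{\alpha}\,\tilde\beta_s(t)$ in the Ito form of \eqref{1.1}, one checks that if $u=u^{\nu\,st}$ is a stationary solution with viscosity $\nu$ and force coefficients $\{b_s\}$, then $v$ is a stationary solution with viscosity $\alpha\nu$ and coefficients $\{\alpha^{3/2}b_s\}$. Writing $\lambda=\alpha^{3/2}$ (so $\alpha=\lambda^{2/3}$), this yields
\[
 S^s_{p,l}(v^{st}) = \lambda^{2p/3}\,S^s_{p,l}(u^{\nu\,st}),\qquad \tilde\varepsilon^B=\lambda^{2}\varepsilon^B,\qquad \tilde\nu=\lambda^{2/3}\nu.
\]
Assumption \eqref{B5} is invariant under $b_s\mapsto\lambda b_s$, so the universal \eqref{L} applies both to $u^{\nu\,st}$ and to $v^{\tilde\nu\,st}$ in the limit $\nu\to 0$ (equivalently $\tilde\nu\to 0$). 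Combining the two asymptotics with the scaling identity and dividing by $(\varepsilon^B l)^q$ gives $C_*\lambda^{2q}=C_*\lambda^{2p/3}$ for every $\lambda>0$, so either $C_*=0$ or $q=p/3$.

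Next I would rule out $C_*=0$ and fix $p$. If $C_*=0$, then \eqref{L} gives $S^s_{p,l}(u^{\nu\,st})\to 0$ as $\nu\to 0$. By Theorem~\ref{t_33} ($\mu_\nu\rightharpoonup\mu_0$ in $\cP(L_p)$) together with the uniform integrability of $\int(\delta^l u^{\nu\,st})^p dx$ provided by \eqref{O2}---exactly the argument used to pass from \eqref{B451} to \eqref{B452} and from \eqref{I5} to \eqref{I6}---one has $S^s_{p,l}(u^{\nu\,st})\to S^s_{p,l}(u^{0\,st})$. But Theorem~\ref{t_4} with $\nu=0$ gives $S^s_{p,l}(u^{0\,st})=S_{p,l}(u^{0\,st})\ge C_p^{-1}l>0$ for even $p$, and the extension of \eqref{B452} to odd moments mentioned at the end of Section~\ref{s_3} gives $S^s_{p,l}(u^{0\,st})\le -c_p'l<0$ for odd $p\ge 3$. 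Either way $S^s_{p,l}(u^{0\,st})\neq 0$ on $(0,c]$, contradicting $C_*=0$. Hence $C_*\neq 0$, $q=p/3$, and the same limit passage in \eqref{L} now yields
\[
  S^s_{p,l}(u^{0\,st}) \,=\, C_*(\varepsilon^B l)^{p/3}, \qquad l\in(0,c].
\]
The left-hand side is of order $l$ uniformly on $(0,c]$ (with known negative sign when $p$ is odd, by the extended weak 4/5-law), whereas the right-hand side is of order $l^{p/3}$; these can agree on a full interval only if $p/3=1$, forcing $p=3$ and $q=1$. The numerical value $C_*=-12$ is then read off from \eqref{I6}.

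The main obstacle is the scaling step: carefully transforming the stochastic term via Brownian time-scaling, verifying that stationarity is preserved for the rescaled process, and ensuring that the simultaneous $\nu\to 0$ passage in the two instances of \eqref{L} is legitimate at fixed $\lambda$ and $l$. In particular the inertial-range constants $c_1,c$ may differ for the forces $\{b_s\}$ and $\{\lambda b_s\}$, but the intersection of the admissible windows still contains a fixed neighborhood of $0$ for all small enough $\nu$, which is all that is needed.
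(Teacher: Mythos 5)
Your proposal is correct and follows essentially the same route as the paper: the same time--amplitude rescaling $u\mapsto\alpha u(\alpha t,\cdot)$ of the equation and force (the paper uses the parameter $\mu=\alpha$ directly rather than $\lambda=\alpha^{3/2}$), yielding $q=p/3$ from the scaling of $S^s_{p,l}$ and $\eps^B$, combined with the order-$l$ bounds from Theorem~\ref{t_4} and the weak law to force $q=1$, and Theorem~\ref{t_final} to read off $C_*=-12$. The only differences are cosmetic: you explicitly rule out $C_*=0$ and route the final step through the inviscid limit, whereas the paper invokes the uniform-in-$\nu$ estimates \eqref{s1}, \eqref{B451} directly.
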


\begin{proof}
Let us abbreviate $u^{\nu \,st}(t)$ to $u(t)$. We  take some real number
 $\mu >1$ and set $\tilde{\xi}(\tau):=\mu ^{-\frac{1}{2}}\xi(\mu \tau)$. This
 also is a standard Wiener process. Denote $w(\tau,x):=\mu u(\mu \tau,x).$ Then $w$ is a stationary
 solution of  equation
\begin{eqnarray}\label{L1}
w_{\tau}(\tau,x)+w(\tau,x)w_{x}(\tau,x)- {\nu^\mu} w_{xx}(\tau,x)=\mu^\frac{3}{2} \partial_{\tau}\tilde{\xi}(\tau,x),
\qquad {\nu^\mu}=\nu \mu.
\end{eqnarray}

Consider the inertial range $J^1 = [c_1\nu, c]$ for eq.  \eqref{1.1} and   inertial
 range  $J^\mu = [c^\mu_1\nu, c^\mu]$  for eq.~\eqref{L1}. For small $\nu$ their intersection
 $ J= J^1\cap J^\mu$ is not empty. For $l\in J$ relation  \eqref{L} holds for $u$ which solves eq.~\eqref{1.1} and for
 $w$, solving  eq.~\eqref{L1}. Since
 $
 S_{p,l}(w) = \mu^p S_{p,l} (u)
 $
 and as  $\eps^B_w = \mu^3 \eps^B_u$ in view of \eqref{epsB},   then from here
 $$
 \mu^p  C_* \big( \eps_u^B l\big)^q + o(\eps_u^B l )^q=   C_* \big( \mu^3 \eps_u^B l\big)^q + o(\eps_u^B l )^q
 $$
 for $l\in J$ and all small $\nu$. As $\mu>1$, then by this equality   $q=p/3$.\,\footnote{
 This is in line with the relation $| u(t,x+r) -u(t,x)| \asymp (\eps |r|)^{1/3}$ which appears in the theory of
 turbulence due to a basic dimension argument, without any relation to the equations, describing  the fluid.
 See \cite[(32,1)]{LL}. }
 On the other hand, it follows from Theorem~\ref{t_4} if $p$ is even and from \eqref{B451} and a discussion after
   \eqref{B452} if $p$ is odd that
$ S_{p,l}( u)\sim -l$ for any integer $p\ge2$. Thus in \eqref{L} $q=1$, and so
 $p=3q=3$. Then by Theorem~\ref{t_final}
 $C_*=-12$ 	and the theorem is proved.
\end{proof}

\begin{remark}
1) The result of Theorem \ref{t_L} remains true with the same proof if relation \eqref{L} is claimed to
hold not for all $l$ from the inertial range, but only  for $l$ from a strongly
inertial range as in \eqref{strong}. In this form asymptotic  \eqref{L}  with $p=3$ and $q=1$ indeed is valid by Theorem~\ref{t_final}.

2) We do not know if for some integer $p\ge2$, different from 3, asymptotical expansion for $S_{p,l}( u^{\nu \,st}(t) )$ of the form
 \eqref{L}, valid for all $l$ from the inertial  range (or from a strongly inertial  range)  may hold  with a constant $C_*$ which  depends on the random force
$\xi$.
\end{remark}

\section*{ Acknowledgement} The authors are thankful to A.~Boritchev for discussion.
PG was supported by Natural Science Foundation of Jilin Province (Grant No. YDZJ202201ZYTS306) and  the Fundamental Research Funds for the Central
Universities. Both authors were supported by the Ministry of Science and Higher Education of the Russian Federation (megagrant No. 075-15-2022-1115).

\section*{ Data Availability Statement}
Data sharing is not applicable to this article as no
datasets were generated or analysed during the current study.

\bibliography{reference}{}

\begin{thebibliography}{10}

\bibitem{FFF}
E.~Aurell, U.~Frisch, J.~Lutsko, and M.~Vergassola.
\newblock {On the multifractal properties of the energy dissipation derived
  from turbulence data}.
\newblock {\em Journal of Fluid Mechanics}, 238:467--486, 1992.

\bibitem{Bat}
G.~K. Batchelor.
\newblock {\em {The theory of homogeneous turbulence}}.
\newblock Cambridge University Press, 1953.

\bibitem{BF}
J.~Bec and U.~Frisch.
\newblock {Burgulence}.
\newblock In M.~Lesieur, A.Yaglom, and F.~David, editors, {\em Les Houches
  2000: New Trends in Turbulence}, pages 341--383. Springer EDP-Sciences, 2001.

\bibitem{BKh}
J.~Bec and K.~Khanin.
\newblock {Burgers turbulence}.
\newblock {\em Physics Reports}, 447:1--66, 2007.

\bibitem{Bedr}
J.~Bedrossian, M.~Coti Zelati, S.~Punshon-Smith, and F.~Weber.
\newblock A sufficient condition for the {K}olmogorov 4/5 law for stationary
  martingale solutions to the 3d {N}avier-{S}tokes equations.
\newblock {\em Communications in Mathematical Physics}, 367:1045--1075, 2019.

\bibitem{Bor}
A.~Boritchev.
\newblock {Sharp estimates for turbulence in white-forced generalised Burgers
  equation}.
\newblock {\em Geometric and Functional Analysis}, 23(6):1730--1771, 2013.

\bibitem{BK}
A.~Boritchev and S.~B. Kuksin.
\newblock {\em One-dimensional turbulence and the stochastic {B}urgers
  equation}.
\newblock Mathematical Surveys and Monographs. AMS Publications, Providence,
  2021.

\bibitem{DZ}
G.~Da~Prato and J.~Zabczyk.
\newblock {\em {Stochastic equations in infinite dimensions}}, volume~45 of
  {\em {Encyclopaedia of Mathematics and its Applications}}.
\newblock Cambridge University Press, 1992.

\bibitem{DV}
P.~Diamond and M.~Vergassola.
\newblock A couse on fluid dynamics.
\newblock
  \url{https://courses.physics.ucsd.edu/2017/Winter/physics216/216_HW_5_W17.pdf},
  2017.

\bibitem{Sin}
W.~E, K.~Khanin, A.~Mazel, and Ya. Sinai.
\newblock {Invariant measures for Burgers equation with stochastic forcing}.
\newblock {\em Annals of Mathematics}, 151:877--960, 2000.

\bibitem{Fal}
G.~Falkovich.
\newblock {\em Fluid Mechanics}.
\newblock Cambridge University Press, Cambridge, second edition, 2018.

\bibitem{FF}
J.-D. Fournier and U.~Frisch.
\newblock L'\'{e}quation de {B}urgers d\'{e}terministe et statistique.
\newblock {\em J. M\'{e}c. Th\'{e}or. Appl.}, 2(5):699--750, 1983.

\bibitem{Fr}
U.~Frisch.
\newblock {\em {Turbulence: the legacy of A.N. Kolmogorov}}.
\newblock Cambridge University Press, 1995.

\bibitem{Kruz}
S.~N. Kruzhkov.
\newblock First order quasilinear equations in several independent variables.
\newblock {\em Mathematics of the {USSR} Sbornik}, 10(2):217--243, 1970.

\bibitem{Kuk}
S.~B. Kuksin.
\newblock Kolmogorov's theory of turbulence and its rigorous 1d model.
\newblock {\em Annales math\'ematiques du Qu\'ebec}, 46:181--193, 2022.

\bibitem{LL}
L.~D. Landau and E.~M. Lifschitz.
\newblock {\em Fluid Mechanics}.
\newblock Pergamon Press, 1959.

\bibitem{Pol}
A.~M. Polyakov.
\newblock Turbulence without pressure.
\newblock {\em Phys. Rev. E}, 52:6183--6188, Dec 1995.

\end{thebibliography}
\bibliographystyle{plain}

\end{document}